\newtheorem{thm}{Theorem}[section]
\newtheorem{prop}[thm]{Proposition}
\newtheorem{lem}[thm]{Lemma}
\newtheorem{exmp}[thm]{Example}
\newtheorem{conj}[thm]{Conjecture}
\newtheorem{ques}[thm]{Question}
\theoremstyle{definition}
\newtheorem{defn}{Definition}
\setlist[enumerate]{itemsep=2ex, topsep=2ex} 
\setlist[itemize]{itemsep=2ex, topsep=2ex}
\renewcommand{\l}{\left}
\renewcommand{\r}{\right}
\newcommand{\floor}[1]{\l\lfloor #1\r\rfloor}
\renewcommand{\l}{\mathcal{P}_2}
\renewcommand{\l}{\left}
\title{New Bounds on Diffsequences}
\author{ Alexander Clifton\footnote{Discrete Mathematics Group, Institute for Basic Science, Daejeon, South Korea {\tt yoa@ibs.re.kr}. The author is supported by the Institute for Basic Science (IBS-R029-C1) and in part by NSF Grant DMS-1945200.}}
\date{\today}
\begin{document}
	\maketitle
	\begin{abstract}
	 For a set of positive integers $D$, a $k$-term $D$-diffsequence is a sequence of positive integers $a_1<a_2<\dots<a_k$ such that $a_i-a_{i-1}\in D$ for $i=2,3,\dots,k$. For $k\in\mathbb{Z}^+$ and $D\subset \mathbb{Z}^+$, we define $\Delta(D,k)$, if it exists, to be the smallest integer $n$ such that every $2$-coloring of $\{1,2,\dots,n\}$ contains a monochromatic $D$-diffsequence of length $k$. We improve the lower bound on $\Delta(D,k)$ where $D=\{2^i\mid i\in\mathbb{Z}_{\geq{0}}\}$, proving a conjecture of Chokshi, Clifton, Landman, and Sawin. We also determine all sets of the form $D=\{d_1,d_2,\dots\}$ with $d_i\mid d_{i+1}$ for which $\Delta(D,k)$ exists.
	\end{abstract}
	\section{Introduction}\label{sec1}

	Ramsey theory questions typically ask about whether a graph contains a monochromatic copy of a particular subgraph when the edges are colored with $r$ colors. However, more generally, the question at the heart of Ramsey Theory is whether or not a set retains a certain property when the elements are divided into disjoint subsets. For example, the positive integers contain arbitrarily long arithmetic progressions, but if we split them into $r$ disjoint sets, will there still be a set with arbitrarily long arithmetic progressions? In 1927, van der Waerden \cite{Wae27} answered this question in the affirmative by showing that every $r$-coloring of the natural numbers will contain arbitrarily long monochromatic arithmetic progressions. 

It is natural to ask what other types of structures besides arithmetic progressions exhibit van der Waerden-type theorems. Van der Waerden's work is predated by Schur \cite{Sch17} who showed that every $r$-coloring of $\mathbb{Z}^+$ contains some monochromatic triple $(x,y,z)$ satisfying $x+y=z$. More recent sequences of study include arithmetic progressions modulo $m$ \cite{LW97,LL94}, arithmetic progressions with a fixed common difference \cite{LW97,LL94, BGL99}, and  descending waves \cite{BEF90}.

Landman and Robertson considered the question of whether an $r$-coloring must produce a monochromatic sequence whose gaps lie in some fixed set. The answer to this question will depend on the choice of $r$ as well as the choice of the set of gaps. In \cite{LR03}, Landman and Robertson introduced the notion of a $D$- diffsequence:

	
	\begin{defn}
	For a set $D$ of positive integers, a \textit{$D$-diffsequence} of length $k$ is a sequence of positive integers $a_1<a_2\dots<a_k$ such that each gap lies in $D$. That is,
	\[a_i-a_{i-1}\in D,\]
	for $i=2,3,\dots, k$.
	\end{defn}
	
	\begin{defn}
	A set $D$ is called \textit{$r$-accessible} if every $r$-coloring of the positive integers contains arbitrarily long monochromatic $D$-diffsequences.
	\end{defn}
	
	Note that any $r$-accessible set $D$ is automatically $t$-accessible for $t<r$. The notion of $r$-accessibility is analogous to van der Waerden's theorem with $r$ colors.
	
	The set of Fibonacci numbers is known to be $2$-accessible \cite{LR03} but not $4$-accessible \cite{Wes22}. The set of primes is not $3$-accessible \cite{LR03} but the question of whether or not it is $2$-accessible remains unresolved. Results on accessibility for fixed translates of the set of primes are given in \cite{LR03} and \cite{LV10}. The set of powers of $n$ is only $2$-accessible when $n=2$ since otherwise there is a periodic coloring modulo $n-1$ that avoids arbitrarily long $D$-diffsequences.
	
	Just as the van der Waerden number $W(r,k)$ is the smallest $n$ such that any $r$-coloring of $[n]$ contains a monochromatic arithmetic progression of length $k$, we can consider analogues in the diffsequence setting.
	If $D$ is $r$-accessible, we let $\Delta(D,k;r)$ denote the smallest positive integer $n$ such that any $r$-coloring of $[n]:=\{1,2,\dots,n\}$ contains a monochromatic $D$-diffsequence of length $k$. For simplicity, we let $\Delta(D,k):=\Delta(D,k;2)$.
	
	Landman and Robertson showed that the set $D=\{2^i\mid i\in\mathbb{Z}_{\geq{0}}\}$ is $2$-accessible with $\Delta(D,k)\leq{2^k-1}$. Chokshi, Clifton, Landman, and Sawin \cite{CCLS18} conjectured the existence of an exponential lower bound. We verify this conjecture, establishing that $\Delta(D,k)$ is at least exponential in $\sqrt{k}$.
	\begin{thm}\label{main}
	When $D=\{2^i\mid i\in\mathbb{Z}_{\geq{0}}\}$,
	\[
	\Delta(D,k)\geq{2^{\sqrt{2k}}\left(\frac{(\sqrt{2}-1)k}{8}-\frac{\sqrt{k}}{8}\right)+\frac{\sqrt{k}}{2}}.
	\]
	\end{thm}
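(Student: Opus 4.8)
The plan is to prove the lower bound by exhibiting an explicit family of $2$-colorings: for a suitable parameter $s$ I will construct a $2$-coloring of an interval $[1,N_s]$ whose length $N_s$ is roughly $2^{s}$ (times a polynomial factor) and which contains no monochromatic $D$-diffsequence of length $k$, so that $\Delta(D,k)>N_s$. The structural feature I will exploit is that $1$ is the only odd element of $D$: a step of size $2^{j}$ with $j\geq 1$ preserves parity, while the unit step flips it. This dichotomy between the unit step and the even powers of $2$ governs the entire analysis.

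The colorings are built recursively through the halving map $\phi(m)=\lfloor m/2\rfloor$. Given a coloring $c$ on $[0,n)$, define a coloring $\tilde c$ on $[0,2n)$ by
\[
\tilde c(m)=c(\lfloor m/2\rfloor)\oplus(m\bmod 2).
\]
The parity correction $(m\bmod 2)$ removes the worst degeneracy of the bare map $m\mapsto c(\lfloor m/2\rfloor)$: since $\tilde c(2b)=c(b)\neq c(b)\oplus 1=\tilde c(2b+1)$, no two consecutive integers $2b,2b+1$ can both lie in a monochromatic diffsequence, so if $a_1<\cdots<a_K$ is $\tilde c$-monochromatic then the images $b_i=\lfloor a_i/2\rfloor$ are \emph{strictly} increasing and again form a $D$-diffsequence. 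Tracking parities, $c(b_i)=\tilde c(a_i)\oplus(a_i\bmod 2)$ is constant across every step of size $\geq 2$ and can flip only across a unit step. Hence a length-$K$ monochromatic $D$-diffsequence for $\tilde c$ descends, with no loss of length, to a length-$K$ $D$-diffsequence for $c$ that is a concatenation of $c$-monochromatic runs separated precisely at unit steps.

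The heart of the argument is a bookkeeping lemma controlling these unit steps. Write $M_s$ for the longest monochromatic $D$-diffsequence forced by the level-$s$ coloring, and let $u_s$ count the parity-flipping unit steps a diffsequence uses. The run decomposition splits a $\tilde c$-monochromatic diffsequence into $u_s+1$ maximal $c$-monochromatic runs, each of length at most $M_{s-1}$, which at face value yields only the \emph{multiplicative} estimate $M_s\leq(u_s+1)M_{s-1}$. This is useless for a lower bound, since it permits the forced length to be comparable to the whole interval. The crucial task is to replace it by an \emph{additive} recursion $M_s\leq M_{s-1}+u_s$ in which $u_s$ grows only linearly in $s$; this is arranged by choosing the base coloring at each scale so that the flips a monochromatic diffsequence can realize are constrained by the coarser scales already present, forcing all but one run to be short. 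Granting such a lemma, $M_s\leq\sum_{j\leq s}u_j=O(s^2)$ while $N_s$ doubles at each level. Taking the largest $s$ with $M_s<k$ then gives $s\approx\sqrt{2k}$ and an admissible interval of length about $2^{\sqrt{2k}}$; pushing the polynomial prefactors and the exact increments $u_s$ through this optimization (a routine but careful computation) is what produces the coefficients $\tfrac{(\sqrt 2-1)k}{8}-\tfrac{\sqrt k}{8}$ and the additive $\tfrac{\sqrt k}{2}$.

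The main obstacle is exactly this upgrade from the multiplicative run bound to the additive, linear-in-$s$ one. The elementary analysis only prevents immediate repeats; it still allows the two opposite-colored runs created by each parity flip to be spliced together, which would give a geometric blow-up and degrade the bound from exponential in $\sqrt k$ to merely linear in $k$. Overcoming this requires understanding how unit steps at the finest scale interact with the structure imposed at all coarser scales, so that the triangular budget $1+2+\cdots+s$ emerges in place of a geometric one. Verifying this cross-scale interaction, and confirming it is tight enough to match the conjectured constant $\sqrt{2}$ in the exponent, is where the real work lies.
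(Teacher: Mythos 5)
Your recursion $\tilde c(m)=c(\lfloor m/2\rfloor)\oplus(m\bmod 2)$ is exactly the Thue--Morse recursion, so the colorings you build are the paper's colorings $P_t$, and your descent $a_i\mapsto\lfloor a_i/2\rfloor$ is a valid mirror image of the paper's analysis (the paper works ``horizontally,'' splitting each period into sub-blocks of size $2^{m+1}$ and locating $a_i$ in the first or second half mod $2^{m+1}$). But the heart of the proof is missing, and you say so yourself: the additive flip budget ``$u_s$ grows only linearly in $s$'' is granted, not proven. This is precisely the paper's Lemma \ref{smallgaps} (at most $m+1$ gaps of size $2^m$ in a $P_t$-monochromatic $D$-diffsequence), and it is not obtained by ``choosing the base coloring at each scale'' --- the coloring is fixed; it is proven by a two-stage argument: first, between any two gaps of size $2^m$ there must occur a strictly smaller gap (the half-switching observation), and then an induction on $m$ in which $q+2$ gaps of size $2^q$ would force the smaller gaps to sum to more than $q2^q$, while the inductive hypothesis caps that sum at $\sum_{m=0}^{q-1}(m+1)2^m=(q-1)2^q+1$, a contradiction. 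Nothing in your sketch substitutes for this; without it you have only the useless multiplicative bound you correctly identify as inadequate, so the proposal has a genuine gap at its central step.

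Separately, even granting the lemma, your framework cannot produce the stated constants, and this is not the ``routine but careful computation'' you claim. Counting only the length of the longest monochromatic diffsequence in $[1,N_s]$ with $N_s\approx 2^s$ and $M_s=O(s^2)$ yields $\Delta(D,k)\gtrsim 2^{\sqrt{2k}}$ with a bounded prefactor; even summing gap sizes under the pure $P_t$ family gives only $2^{\lfloor\sqrt{2k}\rfloor}(\lfloor\sqrt{2k}\rfloor-2)+2$, i.e.\ a $\Theta(\sqrt k)$ prefactor. The theorem's prefactor $\frac{(\sqrt2-1)k}{8}-\frac{\sqrt k}{8}$ requires a second construction absent from your plan: the stretched colorings $P_{t,u}$ in which each Thue--Morse bit is repeated $2^u$ times with $2^u\approx\sqrt k/2$. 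Stretching multiplies every large gap by $2^u$ while the extra small gaps it permits (movement within a sub-block, plus at most one backward reset) cost only $2^{u+1}-1$, and optimizing $t=\lfloor\sqrt{2k}\rfloor$, $u=\lfloor\log_2(\sqrt k/2)\rfloor$ turns the margin $k-2^{u+1}-\frac{t^2}{2}+t\approx(\sqrt2-1)\sqrt k$ into the quoted $\Theta(k)$ coefficient. So to complete your argument you would need both to prove the flip-budget lemma and to add the stretching idea; as written, neither is in place.
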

	
	We next consider some sets that are not $2$-accessible.
	
	\begin{prop}\label{fact}
	The set $D=\{k!\mid k\in\mathbb{Z}^+\}$ is not $2$-accessible.
	\end{prop}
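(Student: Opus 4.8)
The plan is to exhibit an explicit periodic 2-coloring of $\Z^+$ that avoids arbitrarily long $D$-diffsequences when $D=\{k!\mid k\in\Z^+\}$. The key structural observation is that $D$ consists of the numbers $1,2,6,24,120,\ldots$, and every element $d\in D$ with $d=k!$ for $k\geq 3$ is divisible by $6$, while the small elements are $1!=1$ and $2!=2$. So if I reduce modulo a suitable fixed modulus $p$, the differences allowed by $D$ collapse to a small finite set of residues, and I can hope to $2$-color the residue classes mod $p$ so that no monochromatic block of consecutive allowed steps can grow past a bounded length.

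\textbf{Choosing the modulus.} I would look at the residues of $D$ modulo some small $p$. Since $k!\equiv 0\pmod{p}$ for all $k\geq p$, only finitely many elements of $D$ have nonzero residue mod $p$; concretely, the set of residues $\{d\bmod p:d\in D\}$ is finite and easy to list. The idea is to pick $p$ so that this residue set, viewed as a set of allowed steps on $\Z/p\Z$, together with a $2$-coloring of $\Z/p\Z$, forces any monochromatic diffsequence to have bounded length. A natural first candidate is a small modulus like $p=4$ or $p=9$: modulo $4$, the factorials reduce to $1,2,2,0,0,\ldots$, so the allowed steps are $\{1,2,0\}$, where the step $0$ arises from $d\equiv 0\pmod 4$ (i.e.\ $d=4!,5!,\ldots$) and, crucially, a step of $0$ mod $p$ means consecutive diffsequence terms land in the same residue class. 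I would then seek a coloring of $\{0,1,2,3\}$ such that the monochromatic classes cannot be chained together under steps $1,2,0$ for more than a constant number of moves.

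\textbf{The combinatorial core.} The main work, and the step I expect to be the main obstacle, is arguing that the chosen periodic coloring genuinely caps the diffsequence length rather than merely making long sequences hard to see. Because a step of residue $0$ is available (from all $d\geq p!$ with $p\mid d$), a naive coloring fails: one can repeatedly take steps that stay inside a single residue class, and if that class is monochromatic the diffsequence is arbitrarily long. Therefore the coloring must \emph{break up} each residue class internally as well, which means a pure mod-$p$ coloring cannot work and I must instead use a coloring that is periodic with a larger period $P$ (a multiple of $p$) and that, within each residue class mod $p$, alternates colors finely enough to block the $0$-steps. The crux is to choose $P$ and the color pattern so that (i) the genuine jumps $1,2$ mod $p$ between distinct classes are controlled, and (ii) no long run can be assembled using the large divisible-by-$p$ steps, all simultaneously. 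I would set this up by defining the color of $n$ as a function of $\lfloor n/M\rfloor \bmod 2$ for a carefully chosen block length $M$ (a pure block coloring), reducing the claim to showing that within and across two adjacent monochromatic blocks of length $M$, only boundedly many $D$-steps can be taken, which follows because any step of size $\geq M$ that is divisible enough to re-enter a same-colored block must jump by a full period, and only finitely many factorials fit inside the finite window where color agreement is possible.

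Finally, I would verify rigorously that the resulting coloring admits no monochromatic $D$-diffsequence of length exceeding some explicit constant $k_0$, which establishes that $\Delta(D,k)$ does not exist for $k>k_0$ and hence $D$ is not $2$-accessible. The bookkeeping of which factorials fall into which residue/block and confirming no chain escapes the bound is where the real care lies, but conceptually it reduces to the finiteness of nonzero residues of $n!$ modulo any fixed modulus combined with a block coloring that forbids large same-color jumps.
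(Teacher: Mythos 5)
There is a genuine gap, and it is fatal to the whole approach: no periodic coloring of any period can work for $D=\{k!\mid k\in\mathbb{Z}^+\}$, including your block coloring. You correctly identify the obstruction for a pure mod-$p$ coloring (the $0$-steps), but your proposed fix, coloring $n$ by $\lfloor n/M\rfloor \bmod 2$, is itself periodic with period $2M$, and the same obstruction kills it. Concretely: for every $k\geq 2M$ we have $2M\mid k!$, so fixing any such $k$, the sequence $a,\ a+k!,\ a+2k!,\ a+3k!,\ldots$ is a $D$-diffsequence (the definition allows the same difference $k!$ to be reused at every step --- there is no distinctness requirement on the gaps), and every term has the same value of $\lfloor n/M\rfloor \bmod 2$, hence the same color. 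Your closing claim that ``only finitely many factorials fit inside the finite window where color agreement is possible'' implicitly assumes the steps must run through distinct, growing factorials; they need not. The same argument shows that \emph{any} coloring with period $P$ admits arbitrarily long monochromatic $D$-diffsequences via steps of $P!$, so no amount of tuning $p$, $P$, or $M$ rescues the plan. (This is exactly the feature distinguishing factorials from powers of $2$: powers of $2$ are never eventually divisible by every fixed modulus, which is why periodic colorings succeed in Section 2 but cannot here.)

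The paper therefore uses an intrinsically aperiodic coloring: color $n$ by the parity of $\lfloor n\alpha\rfloor$ where $\alpha = 2-\frac{e}{2}-\frac{1}{2e} = 1-\sum_{i=1}^{\infty}\frac{1}{(2i)!}$. The point of this choice is that multiplying by $k!$ telescopes the series, leaving $k!\alpha$ congruent mod $2$ to $1$ minus a small positive tail; Lemma \ref{1/3bound} makes this quantitative, showing $k!\alpha\in[2n+\frac{1}{3},\,2n+1)$ for every $k$. Thus every allowed step advances the position of $a_i\alpha$ mod $2$ by at least $\frac{1}{3}$ within its parity window, and a short nested-interval argument shows a monochromatic $D$-diffsequence of length $4$ is already impossible. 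If you want to salvage your write-up, the lesson is that the finiteness of nonzero residues of $k!$ modulo a fixed modulus works \emph{against} you, not for you, and you should look for a coloring whose defining function is incommensurable with all the factorials simultaneously --- which is precisely what the Beatty coloring with this specially constructed $\alpha$ achieves.
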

	Generalizing both powers of $2$ and factorials, we consider sets of the form \[D_{\{a_n\}}:=\bigg\{\displaystyle\prod_{i=1}^k a_i\mid k\in\mathbb{Z}^+\bigg\},\] for a fixed sequence of positive integers $a_1,a_2,a_3,\dots$ with $a_i\geq{2}$ whenever $i\geq{2}$. The elements of the set $D_{\{a_n\}}$, arranged in increasing order, form a \textit{dividing sequence} where each term is a factor of all subsequent terms.
	
	Note that $\{a_n\}=(1,2,2,2,2,\dots)$ recovers powers of $2$ while $\{a_n\}=(1,2,3,4,5,\dots)$ recovers factorials.
	
	We are able to completely characterize when $D_{\{a_n\}}$ is $2$-accessible:
	\begin{thm}\label{1.3}
	$D_{\{a_n\}}$ is $2$-accessible if and only if $\{a_n\}$ contains arbitrarily long strings of consecutive $2$'s.
	\end{thm}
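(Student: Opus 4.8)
My plan is to prove the two directions separately: sufficiency by an embedding into the powers-of-two problem, and necessity by constructing an explicit $2$-coloring.

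For sufficiency, suppose $\{a_n\}$ contains arbitrarily long runs of $2$'s, fix a target length $k$, and set $N_0:=2^k-1$. Since powers of $2$ are $2$-accessible with $\Delta(\{2^i\},k)\leq N_0$, and since a length-$k$ monochromatic $\{2^i\}$-diffsequence inside $[N_0]$ can only use differences $2^i<2^k$, i.e.\ with $i\leq k-1$, every $2$-coloring of $[N_0]$ already contains a monochromatic diffsequence of length $k$ whose gaps lie in $\{2^0,\dots,2^{k-1}\}$. Now pick a run $a_{m+1}=\cdots=a_{m+k-1}=2$; then $d_{m+i}=2^i d_m$ for $0\leq i\leq k-1$, so $\{d_m,\dots,d_{m+k-1}\}=d_m\cdot\{2^0,\dots,2^{k-1}\}\subseteq D_{\{a_n\}}$, where $d_m=\prod_{i=1}^m a_i$. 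Given any $2$-coloring $\chi$ of $\Z^+$, define $\chi'(j):=\chi(j\,d_m)$ on $[N_0]$; a $\chi'$-monochromatic diffsequence $j_1<\cdots<j_k$ with gaps in $\{2^0,\dots,2^{k-1}\}$ scales to $d_m j_1<\cdots<d_m j_k$, which is $\chi$-monochromatic with gaps in $d_m\{2^0,\dots,2^{k-1}\}\subseteq D_{\{a_n\}}$. As $k$ was arbitrary, $D_{\{a_n\}}$ is $2$-accessible.

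For necessity, assume the runs of consecutive $2$'s have length at most $L$; I must produce a single $2$-coloring of $\Z^+$ under which monochromatic $D_{\{a_n\}}$-diffsequences have bounded length. First I would reduce to $a_1=1$: every element of $D_{\{a_n\}}$ is a multiple of $a_1$, so each diffsequence lies in one residue class modulo $a_1$, and coloring each class by the coloring for the shifted sequence $(1,a_2,a_3,\dots)$ (which has the same run structure) reduces to that case. Writing integers in the mixed-radix system with place values $d_k=\prod_{i\leq k}a_i$, each step $x\mapsto x+d_k$ increments one digit and triggers a carry, and the bounded-run hypothesis lets me group the places into consecutive blocks of bounded length, each with block-base $B_r=d_{m_r}/d_{m_{r-1}}\geq 3$. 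The guiding model is a single base $b\geq 3$ (as for powers of $n$): there every $d_k\equiv 1\pmod{b-1}$, so the digit sum modulo any fixed $g\geq 2$ dividing $b-1$ advances by exactly $1$ each step, and a balanced $2$-coloring of $\Z/g$ caps monochromatic runs at $\lceil g/2\rceil$. The general bounded-run case includes factorials (the instance $L=1$), thereby recovering Proposition~\ref{fact}.

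The heart of the matter is to execute this idea when the block-bases vary, and I expect this to be the main obstacle. No single modulus can work: as soon as one radix is even, every large place value $d_k$ is even and is divisible by any fixed modulus, so any periodic or parity-type coloring is constant along an arithmetic progression with large even common difference. I would therefore build the coloring non-periodically and recursively across blocks: within a single block the system is finite and hence contributes only bounded monochromatic runs, while the inter-block combination must be arranged so that each block-carry still forces a color change within a bounded number of steps, the bound depending only on $L$. Controlling how carries propagate so that they cannot assemble an unboundedly long monochromatic diffsequence is precisely where the hypothesis that $2$-runs have length $\leq L$ enters, and it is the most delicate point: long runs of $2$'s are exactly the configurations that, as in the powers-of-two case, reconstruct the full accessible binary structure and so must be excluded. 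Finally, by a standard compactness argument it suffices to produce, for each $N$, a $2$-coloring of $[N]$ with no monochromatic diffsequence longer than this $L$-dependent bound, which the recursive construction should yield uniformly in $N$.
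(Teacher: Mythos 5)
Your ``if'' direction is complete and correct, and it is essentially the paper's idea in a quantitative form: both arguments rest on the observation that a run of consecutive $2$'s of length $k-1$ places the geometric progression $d_m, 2d_m,\dots,2^{k-1}d_m$ inside $D_{\{a_n\}}$. The paper phrases this as an extension argument (a maximal monochromatic diffsequence $b_1<\dots<b_k$ forces $b_k+2^tC$, $t=0,\dots,k$, to be the opposite color, which yields a longer diffsequence in that color), whereas you scale the finite Landman--Robertson bound $\Delta(\{2^i\},k)\le 2^k-1$ through the coloring $\chi'(j):=\chi(jd_m)$; either works, and yours has the small virtue of giving explicit bounds. (One trivial point: choose the run so that $m\ge 1$, so that each gap $d_m2^i=d_{m+i}$ is a genuine nonempty product and hence lies in $D_{\{a_n\}}$; this is free since the runs are arbitrarily long.)

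The ``only if'' direction, however, contains a genuine gap, and you flag it yourself: after the reduction to $a_1=1$ (which is correct and identical to the paper's), no coloring is ever defined. The mixed-radix plan does not obviously close because the difficulty is scale-invariant: a $D_{\{a_n\}}$-diffsequence may use only gaps $d_t$ with $t$ beyond any fixed block index, so ``each block is finite and contributes bounded runs'' gives no bound --- the entire problem reappears among the high digits, and the recursive inter-block rule whose existence you posit, with a carry-propagation invariant holding simultaneously at all scales for one fixed coloring, is exactly the content of the theorem. (Your supporting heuristic is also off: from one even radix you infer that $d_k$ is ``divisible by any fixed modulus,'' but for $\{a_n\}=(1,4,4,\dots)$ one has $d_k=4^{k-1}\equiv 1\pmod{3}$, so a single modulus does work there.) The paper's construction avoids digits and carries entirely: color $n$ by the parity of $\lfloor n\alpha\rfloor$, where $\alpha$ is produced as a limit of nested closed intervals chosen so that $d_t\alpha\in[2^{-k},1]\pmod{2}$ for every $t$ simultaneously (Lemma~\ref{1/power}). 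The recursion pulls the seed interval $[\frac{1}{2},1]$ back through $x\mapsto\frac{a_b-1+x}{a_b}$ for odd $a_b$ and $x\mapsto\frac{a_b-2+x}{a_b}$ for even $a_b$; each $a_b=2$ halves the left endpoint while any $a_b>2$ restores it to at least $\frac{1}{2}$, so the hypothesis that runs of $2$'s have length less than $k$ enters precisely to keep all left endpoints at least $2^{-k}$ (Claim~\ref{prop2}) --- this is the uniform, all-scales control your sketch lacks. Given such an $\alpha$, each monochromatic step advances $b_i\alpha \bmod 1$ by at least $2^{-k}$ without crossing a parity boundary, so monochromatic diffsequences have length at most $2^k$, for the actual coloring of $\mathbb{Z}^+$ and with no compactness argument needed; this also subsumes the factorial case of Proposition~\ref{fact}, as you anticipated.
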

	
	We will prove Theorem \ref{main} in Section \ref{sec1}, Proposition \ref{fact} in Section \ref{prev}, and Theorem \ref{1.3} in Section \ref{sec4}. In Section \ref{cr}, we propose some further questions.

	\section{Powers of $2$}\label{sec2}
	Throughout this section, we let $D=\{2^i\mid i\in\mathbb{Z}_{\geq{0}}\}$. We will prove Theorem \ref{main} by utilizing a series of periodic colorings to obtain the lower bound. We begin by defining a family of colorings, $P_t$, for $t\in\mathbb{Z}^+$. These colorings will not quite yield the desired bound but serve as the basis for another family of colorings which will.
	
	Let $P_1$ be a periodic coloring with a repeating block of ``$10$". This means that odd numbers are assigned color $1$ and even numbers are assigned color $0$.
	
	Let $P_2$ be a periodic coloring with a repeating block of ``$1001$". This means numbers that are $0$ or $1\pmod{4}$ are color $1$ while numbers that are $2$ or $3\pmod{4}$ are color $0$.
	
	Continuing in this manner, we define $P_t$ as a periodic coloring with a repeating block of size $2^t$. The first half of the repeating block is the repeating block of $P_{t-1}$ while the second half is the complement of the first half. Thus, by construction, any two numbers $x$ and $x+2^{t-1}$ have different colors with respect to $P_t$.
	
	Note that the repeating block of $P_t$ can be thought of as the first $2^t$ bits of the Thue-Morse sequence and that $P_3$ is the periodic coloring that Landman and Robertson used to obtain their initial linear lower bound in \cite{LR03}.
	
	A monochromatic $D$-diffsequence $a_1<a_2<\dots<a_k$ with respect to the coloring $P_t$ has no gaps of size $2^{t-1}$. Furthermore, we can bound from above the number of gaps of each size less than $2^{t-1}$ in a way that does not depend on $k$.

\begin{lem}\label{smallgaps}
For $t\geq{2}$, the following holds for any monochromatic $D$-diffsequence under the coloring $P_t$: For $m=0,1,\dots,t-2$, there are at most $m+1$ gaps of size $2^m$.
\end{lem}
\begin{proof}
Consider a monochromatic $D$-diffsequence with respect to $P_t$. For each $m=0,1,\dots,{t-2}$, we can split each repeating block of size $2^t$ into $2^{t-m-1}$ disjoint sub-blocks of size $2^{m+1}$. Note that a sub-block of size $2^{m+1}$ looks like a repeating block of $P_{m+1}$ (possibly with colors swapped). Thus, by construction, if $a_{i+1}-a_i=2^m$, then $a_i$ cannot be in the first half of any sub-block of size $2^{m+1}$. Therefore, for any gap $a_{i+1}-a_i$ of size $2^m$, $a_i\in\{ 2^m+1,2^m+2,\dots,2^{m+1}\}\pmod{2^{m+1}}$ and $a_{i+1}\in\{1,2,\dots,2^m\}\pmod{2^{m+1}}$. We say that $a_i$ is in the \textit{second half} mod $2^{m+1}$ while $a_{i+1}$ is in the \textit{first half}.
 
 Keeping with the assumption that $a_{i+1}-a_i=2^m$, if the gaps $a_{i+2}-a_{i+1},a_{i+3}-a_{i+2},\dots, a_{j}-a_{j-1}$ are all larger than $2^m$, then $a_j$ is in the first half mod $2^{m+1}$ and thus $a_{j+1}-a_j$ cannot be a gap of size $2^m$. This means that in between every two gaps of the same size, there is at least one gap of a smaller size. Since there is no way to have a gap of size smaller than $1$, there is at most one gap of size $1$, establishing Lemma \ref{smallgaps} for $m=0$.
 
 We will now finish the proof via induction. Suppose that there exists some $1\leq{q}\leq{t-2}$ such that there are at most $m+1$ gaps of size $2^m$ for $m=0,1,2,\dots,q-1$. We will now show that there are at most $q+1$ gaps of size $2^q$.
 
 Assume for the sake of contradiction that there are at least $q+2$ gaps of size $2^q$. The first $q+2$ of these are the gaps between $a_{i_j}$ and $a_{i_j+1}$ for some indices $i_j$ with $j=1,2,\dots,q+2$. For $j=1,2,\dots,q+1$, the sequence of gaps $a_{i_j+2}-a_{i_j+1},a_{i_j+3}-a_{i_j+2},\dots,a_{i_{j+1}}-a_{i_{j+1}-1}$, consists of at least one power of $2$ smaller than $2^q$, as well as possibly some powers of $2$ larger than $2^q$, which we can ignore, as they are all $0\pmod{2^{q+1}}$.
 
 For $j=1,2,\dots,q+1$, let $A_j$ denote the sum of those gaps among $a_{i_j+2}-a_{i_j+1},{a_{i_j+3}-a_{i_j+2}},\dots,\\
 {a_{i_{j+1}}-a_{i_{j+1}-1}}$ that are less than $2^q$. Consider $a_{i_1}$, which we know to be in the second half mod $2^{q+1}$. Similarly, every $a_{i_j}$ is in the second half mod $2^{q+1}$, for $j=2,3,\dots,q+2$. Hence, for each $j=1,2,\dots,q+1$, we have that $(a_{i_1}+2^q)+(A_1+2^q)+(A_2+2^q)+\dots+(A_{j-1}+2^q)+A_j=a_{i_1}+(A_1+A_2+\dots+A_j)+j2^q$ is in the second half mod $2^{q+1}$. This means that $a_{i_1}+(A_1+A_2+\dots+A_j)$ is in the second half mod $2^{q+1}$ if $j$ is even and is in the first half mod $2^{q+1}$ if $j$ is odd.
 
 Beginning with $a_{i_1}$, which is in the second half mod $2^{q+1}$, every time we add the next $A_j$, we switch halves mod $2^{q+1}$. This means that $A_1+A_2+\dots+A_r>(r-1)2^q$ for any $r=1,2,\dots,q+1$.
 In particular, $A_1+A_2+\dots+A_{q+1}>q2^q$. So there exist gaps of sizes $1,2,4,\dots,2^{q-1}$ that collectively sum to at least $q2^q+1$. By the inductive hypothesis, we have at most $m+1$ gaps of size $2^m$ for $m=0,1,\dots,q-1$.
 Thus the gaps of size $1,2,4,\dots,2^{q-1}$ collectively sum to at most:
 \begin{align*}
     \displaystyle\sum_{m=0}^{q-1} (m+1)2^m&=\sum_{m=0}^{q-1}(2^q-2^m)\\
     &=q2^q-\sum_{m=0}^{q-1}2^m\\
     &=q2^q-(2^q-1)\\
     &=(q-1)2^q+1.
 \end{align*}
 This contradicts that these must sum to at least $q2^q+1$, so it is impossible for a monochromatic $D$-diffsequence with respect to $P_t$ to have $q+2$ gaps of size $2^q$. Thus, there are at most $m+1$ gaps of size $2^m$ for $m=0,1,2,\dots,q$ and the induction is complete.
\end{proof}

	Lemma \ref{smallgaps} gives a constant upper bound, $1+2+\dots+(t-1)=\frac{t(t-1)}{2}$, on the number of gaps smaller than $2^t$ in a monochromatic $D$-diffsequence with respect to $P_t$ and thus a lower bound of $k-1-\frac{t(t-1)}{2}$ on the number of gaps of size at least $2^t$. Summing the sizes of these gaps gives a lower bound for $\Delta(D,k)$ that is linear in terms of $k$ and whose slope depends on $t$:
	\[
	\Delta(D,k)\geq{2^t(k-1-\frac{t(t-1)}{2})}.
	\]
	Picking $t=\lfloor{\sqrt{2k}\rfloor}$ for a given $k$ is in fact enough to obtain a lower bound of \[\Delta(D,k)\geq{{2^{\lfloor\sqrt{2k}\rfloor}(\frac{\sqrt{2k}}{2}-1)}}.\] This bound has the same exponent as in Theorem \ref{main}, but differs by a factor of $\Theta(\sqrt{k})$. Building on our family of $P_t$ colorings, we will demonstrate a more refined series of colorings that achieves a slightly better lower bound.
	
	For $u\in\mathbb{Z}_{\geq{0}}$, let $P_{t,u}$ be the periodic coloring with a repeating block of size $2^{t+u}$ obtained by replacing each bit of $P_t$ by $2^u$ copies of itself. We call these $2^u$ consecutive $1$'s or $2^u$ consecutive $0$'s a \textit{sub-block}.  Note that $P_{t,0}$ is simply ${P_t}$.
	
	\begin{exmp}
	    $P_{3,1}$ has a repeating block of ``$1100001100111100$", containing $8$ sub-blocks of length $2$.
	 \end{exmp}
\begin{exmp}$P_{2,2}$ has a repeating block of ``$1111000000001111$", containing $4$ sub-blocks of length $4$.
	\end{exmp}
	
	We now use the family of $P_{t,u}$ colorings to prove Theorem \ref{main}.
	\begin{proof}
	If $a_i$ and $a_{i+1}$ are consecutive entries in a monochromatic $D$-diffsequence with respect to the coloring $P_{t,u}$, there are three possibilities: 
\begin{itemize}
    \item[i)] $a_i$ and $a_{i+1}$ are in different positions within the same sub-block, or
    \item[ii)] $a_i$ and $a_{i+1}$ are in different sub-blocks (possibly in the same block) but the same position within their respective sub-blocks, or
    \item[iii)] $a_i$ and $a_{i+1}$ are in different sub-blocks and different positions within their respective sub-blocks.
\end{itemize}
	
	In the second case, $a_{i+1}-a_i$ is a multiple of $2^u$, the length of a sub-block, so $a_{i+1}$ appears $2^l$ sub-blocks after $a_i$ for some $l$. In the third case, we note that $a_{i+1}-a_i$ cannot be divisible by the sub-block size $2^u$. Therefore, it is at most $2^{u-1}$, so $a_i$ and $a_{i+1}$ are in consecutive sub-blocks.
	
	If we consider what sub-blocks $a_1,a_2,\dots,a_k$ lie in and ignore those that are not the first $a_i$ in their respective sub-block, this corresponds to a monochromatic $D$-diffsequence with respect to the coloring $P_t$.
	
	\begin{exmp}
	$5,6,10,11,12,28$ is a monochromatic $D$-diffsequence in color $0$ with respect to the coloring $P_{2,2}$. If we look at the sub-blocks these are in, we have $2,2,3,3,3,7$. Ignoring repeated occurrences of the same sub-block leaves us with $2,3,7$, which is a monochromatic $D$-diffsequence of color $0$ with respect to $P_2$.
	\end{exmp}
	
	Applying Lemma \ref{smallgaps} to this $D$-diffsequence gives that for $m=0,1,\dots,t-2$, there are at most $m+1$ occurrences of $a_i$ and $a_{i+1}$ in the original sequence at a distance of $2^m$ sub-blocks apart. In particular, there is at most one $i$ for which $a_i$ and $a_{i+1}$ are in consecutive sub-blocks. For $m=1,\dots,t-2$, there at most $m+1$ values of $i$ with $a_{i+1}-a_i=2^{m+u}$.
	
	In the first case, $a_{i+1}$ is in a later position within the sub-block than $a_i$ is, while in the second case, $a_{i+1}$ is in the same position within its respective sub-block as $a_i$. In the third case, $a_{i+1}$ is earlier in its sub-block than $a_i$ is, but since these are consecutive sub-blocks, this can happen at most once. Thus, if we keep track of changes in the position of $a_i$ within its sub-block, it can increase within the range $1$ to $2^u$, decrease once, then increase again, potentially up to $2^u$. This means the gaps of size less than $2^u$, accounted for by the first and third cases, sum to at most $2(2^u)-1$. For $m=0,\dots,t-2$, we have an upper bound on the number of gaps of size $2^{m+u}$. We can then obtain a lower bound on the number of gaps of size at least $2^{t+u}$. (Note that gaps of size $2^{t-1+u}$ cannot occur in a monochromatic $D$-diffsequence with respect to $P_{t,u}$.)
	
	Although it is possible for gaps of size $2^u$ to occur in a monochromatic $D$-diffsequence with respect to $P_{t,u}$, there is never any reason to use them if the goal is to minimize $a_k$. This is because a gap of size $2^u$ can only occur if two sub-blocks of all $1$'s or two sub-blocks of all $0$'s are adjacent. In that case, it is better to use $2^u$ consecutive gaps of size $1$ in place of one gap of size $2^u$. This means there are at most $\displaystyle\sum_{m=1}^{t-2}(m+1)$ gaps of size less than $2^{t+u}$ accounted for by the second case, in addition to at most $2^{u+1}-1$ gaps of size less than $2^{t+u}$ accounted for by the first and third cases. There are a total of $k-1$ gaps $a_{i+1}-a_i$, so at least
	
	\[
	(k-1)-\displaystyle\sum_{m=1}^{t-2}(m+1)-(2^{u+1}-1)
	\]
	
	have size at least $2^{t+u}$. Replacing a smaller gap by another gap of size at least $2^{t+u}$ would increase the sum of the gap sizes, so to minimize the total size of the gaps, we will take as many as possible of each size less than $2^{t+u}$.
	
	Thus, the sum of the $k-1$ gaps is at least:
	
	\begin{align*}
    &(2^{u+1}-1)+\displaystyle\sum_{m=1}^{t-2} (m+1)2^{m+u}+[k-1-(2^{u+1}-1)-\displaystyle\sum_{m=1}^{t-2} (m+1)]2^{t+u}\\=&2^{t+u}(k-2^{u+1}-\frac{(t-2)(t+1)}{2})+(2^{u+1}-1)+(2^{u+t-1}-2^{u+1})+\displaystyle\sum_{m=1}^{t-2}(2^{u+t-1}-2^{u+m})\\
    =&2^{t+u}(k-2^{u+1}-\frac{t^2}{2}+\frac{t}{2}+1)+(t-2)2^{u+t-1}+2^{u+1}-1\\
    =&2^{t+u}(k-2^{u+1}-\frac{t^2}{2}+t)+2^{u+1}-1.
\end{align*}

This sum is a lower bound for $a_k-a_1$ so 
\[
\Delta(D,k)\geq{2^{t+u}(k-2^{u+1}-\frac{t^2}{2}+t)+2^{u+1}}.
\]

We can now strategically choose $t$ and $u$ to optimize the lower bound. Roughly, we would like $t+u$ to be as large as possible, while keeping $k-2^{u+1}-\frac{t^2}{2}+t$ positive. Typically, increasing $u$ by one has more impact on $k-2^{u+1}-\frac{t^2}{2}+t$ than increasing $t$ by one does, so we prioritize maximizing $t$, which gives $\frac{t^2}{2}\approx k$. Then, we maximize $u$ by choosing $2^{u+1}\approx t$. Selecting $t=\floor{\sqrt{2k}}$ and $u=\floor{\log_2{\frac{\sqrt{k}}{2}}}$, we have:

\begin{align*}
    \Delta(D,k)&\geq{2^{\floor{\sqrt{2k}}+\floor{\log_2{\frac{\sqrt{k}}{2}}}}\left(k-2^{\floor{\log_2{\frac{\sqrt{k}}{2}}}+1}-\frac{\floor{\sqrt{2k}}^2}{2}+\floor{\sqrt{2k}}\right)+2^{\floor{\log_2{\frac{\sqrt{k}}{2}}}+1}}\\
    &\geq{\frac{2^{\sqrt{2k}}(\frac{\sqrt{k}}{2})}{4}\left(k-\sqrt{k}-k+\sqrt{2k}-1\right)+\frac{\sqrt{k}}{2}}=2^{\sqrt{2k}}\left(\frac{(\sqrt{2}-1)k}{8}-\frac{\sqrt{k}}{8}\right)+\frac{\sqrt{k}}{2}.
\end{align*}
	\end{proof}

	\section{Factorials}\label{prev}
	In this section, we let $D=\{k! \mid k\in \mathbb{Z}^+\}$.
We would like to show that the set, $D$, of factorials is not $2$-accessible. This means we must demonstrate a coloring of $\mathbb{Z}^+$ that avoids arbitrarily long $D$-diffsequences. We construct the following $2$-coloring, which can be thought of as a slight perturbation of a periodic coloring:
	
	The integer $n$ is assigned color $0$ or $1$ depending on the parity of $\lfloor{n\alpha\rfloor}$ where \[\alpha:=2-\frac{e}{2}-\frac{1}{2e}=1-\displaystyle\sum_{i=1}^\infty \frac{1}{(2i)!}.\]
	Note that the numbers colored $0$ are precisely the indices of the even terms in the Beatty sequence $\mathcal{B}_{2+\alpha}:=\floor{1(2+\alpha)}, \floor{2(2+\alpha)},\floor{3(2+\alpha)},\dots$. 
	
	Here we prove Proposition \ref{fact} by showing that this coloring avoids $D$-diffsequences of length $4$. That is, there are no $a_1<a_2<a_3<a_4$ in $\mathbb{Z}^+$ such that $\lfloor{a_i\alpha\rfloor}$ has the same parity for $i=1,2,3,4$ and that $a_{i+1}-a_i\in\{k!\mid k\in\mathbb{Z}^+\}$ for $i=1,2,3$.
	
\begin{lem}\label{1/3bound}
For each $k\in\mathbb{Z}^+$, there exists some integer $n$ such that $2n+\frac{1}{3}\leq{k!\alpha}<2n+1$.
\end{lem}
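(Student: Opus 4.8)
The plan is to reduce the statement about the position of $k!\alpha$ modulo $2$ to two independent facts: a parity statement about an integer part, and a quantitative bound on a rapidly converging tail. Writing $\alpha = 1-\sum_{i\geq 1}\frac{1}{(2i)!}$ and multiplying by $k!$, I would separate the resulting series according to which terms are integers. Setting $I_k:=k!-\sum_{2i\leq k}\frac{k!}{(2i)!}$ and $F_k:=\sum_{2i>k}\frac{k!}{(2i)!}$, one has $k!\alpha=I_k-F_k$ with $I_k\in\Z$ and $F_k>0$. Since I will show $F_k<1$, this gives $\lfloor k!\alpha\rfloor=I_k-1$ with fractional part $1-F_k$. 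The target inequality $2n+\frac13\leq k!\alpha<2n+1$ is then equivalent to showing that $I_k$ is odd (so that $I_k-1$ is even) together with $1-F_k\in[\frac13,1)$, i.e. $0<F_k\leq\frac23$; taking $n=(I_k-1)/2$ completes the reduction.

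For the parity of $I_k$, I would observe that the summand $\frac{k!}{(2i)!}=(2i+1)(2i+2)\cdots k$ is a product of consecutive integers, hence even whenever the range $\{2i+1,\dots,k\}$ has length at least two, i.e. whenever $k\geq 2i+2$. Thus the only odd contributions come from $2i=k$ (when $k$ is even, giving the empty product $1$) and from $2i=k-1$ (when $k$ is odd, giving the odd factor $k$). Since $k!$ is even for $k\geq 2$, in every case exactly one odd term survives modulo $2$, so $I_k\equiv 1\pmod 2$; the small cases $k=1,2$ are checked directly. This is the step where the specific shape of $\alpha$, using only even-index factorial reciprocals, is essential, and I expect it to be the main conceptual point of the argument.

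For the bound on $F_k$, let $2j$ be the least even integer exceeding $k$, so the leading term of $F_k$ is $\frac{k!}{(2j)!}$, equal to $\frac{1}{k+1}$ when $k$ is odd and to $\frac{1}{(k+1)(k+2)}$ when $k$ is even. Factoring this out and comparing the remaining series to a geometric series with ratio $\frac{1}{(2j+1)(2j+2)}$ gives $F_k\leq \frac{k!}{(2j)!}\bigl(1-\frac{1}{(2j+1)(2j+2)}\bigr)^{-1}$. The right-hand side is largest at $k=1$, where it equals $\frac{6}{11}<\frac23$ (reflecting that $F_1=\cosh(1)-1<\frac23$), and it decreases for larger $k$; in particular $0<F_k\leq\frac23$ throughout. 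This estimate is routine, but it is where the numerical value $\alpha=2-\cosh(1)$ actually enters, and where the threshold $\frac13$ in the statement is seen to be the right choice.

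Combining the two facts, $k!\alpha=I_k-F_k$ with $I_k=2n+1$ odd and $F_k\in(0,\frac23]$ yields $k!\alpha=2n+(1-F_k)\in[2n+\frac13,\,2n+1)$, which is exactly the desired conclusion.
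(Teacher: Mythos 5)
Your proposal is correct and follows essentially the same route as the paper: both expand $k!\alpha$ via $\alpha=1-\sum_{i\geq 1}\frac{1}{(2i)!}$, split $k!\alpha$ into an integer part that is odd (because each $\frac{k!}{(2i)!}$ with $2i\leq k-2$ contains two consecutive integer factors and is therefore even, leaving exactly one odd surviving term) minus a positive tail, and bound that tail by a comparison with a geometric series to get it at most $\frac{2}{3}$. Your uniform packaging is in fact marginally cleaner than the paper's, which treats $k$ even and odd separately and then recovers $k=1$ by an indirect back-computation from the $k=2$ bound; your one imprecision, that the bound \emph{decreases} in $k$ (it is monotone only within each parity class), is harmless since the claim you actually use, that the maximum over all $k$ is $\frac{6}{11}<\frac{2}{3}$ at $k=1$, checks out immediately.
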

\begin{proof}
We split into the cases where $k$ is even and when $k$ is odd:

When $k=2m$ for $m\geq{1}$, we have
\begin{align*}
    k!\alpha&=(2m)!-\displaystyle\sum_{i=1}^{\infty}\frac{(2m)!}{(2i)!}
    =(2m)!-\displaystyle\sum_{i=1}^{m}\frac{(2m)!}{(2i)!}-\displaystyle\sum_{i=m+1}^{\infty}\frac{(2m)!}{(2i)!}\\
    &=(2m)!-\displaystyle\sum_{i=1}^{m-1}[(2m)(2m-1)\dots(2i+1)]-1-\displaystyle\sum_{i=m+1}^{\infty}\frac{1}{(2m+1)(2m+2)\dots(2i)}\\
    &=1-\displaystyle\sum_{i=m+1}^{\infty}\frac{1}{(2m+1)(2m+2)\dots(2i)}\pmod{2}.
\end{align*}

To reach the desired conclusion, it suffices to demonstrate that \[\frac{2}{3}\geq{\displaystyle\sum_{i=m+1}^{\infty}\frac{1}{(2m+1)(2m+2)\dots(2i)}}>0.\] It is clear that the sum is positive. It can also be bounded above by the following convergent geometric series:
\begin{align*}
    \displaystyle\sum_{i=m+1}^{\infty}\frac{1}{(2m+1)^{2i-2m}}&=\frac{\frac{1}{(2m+1)^2}}{1-\frac{1}{(2m+1)^2}}=\frac{1}{(2m+1)^2-1}\\
    &\leq{\frac{1}{3^2-1}}=\frac{1}{8}.
\end{align*}

When $k=2m+1$ for $m\geq{1}$, we have
\begin{align*}
k!\alpha&=(2m+1)!-\displaystyle\sum_{i=1}^{\infty}\frac{(2m+1)!}{(2i)!}=(2m+1)!-\displaystyle\sum_{i=1}^{m}\frac{(2m+1)!}{(2i)!}-\displaystyle\sum_{i=m+1}^{\infty}\frac{(2m+1)!}{(2i)!}\\
&=(2m+1)!-\displaystyle\sum_{i=1}^{m-1}\frac{(2m+1)!}{(2i)!}-(2m+1)-\displaystyle\sum_{i=m+1}^{\infty}\frac{1}{(2m+2)(2m+3)\dots(2i)}\\
&=1-\displaystyle\sum_{i=m+1}^{\infty}\frac{1}{(2m+2)(2m+3)\dots(2i)}\pmod{2}.
\end{align*}
To reach the desired conclusion, it suffices to demonstrate that \[\frac{2}{3}\geq{\displaystyle\sum_{i=m+1}^{\infty}\frac{1}{(2m+2)(2m+3)\dots(2i)}}>0.\] It is clear that the sum is positive. It can also be bounded above by the following convergent geometric series:
    \begin{align*}
        \displaystyle\sum_{i=m+1}^{\infty}\frac{1}{(2m+2)^{2i-2m-1}}&=\frac{\frac{1}{2m+2}}{1-\frac{1}{(2m+2)^2}}=\frac{2m+2}{(2m+2)^2-1}\\
        &\leq{\frac{4}{4^2-1}}=\frac{4}{15}.
    \end{align*}
    
Only $k=1$ remains to be checked. We have established that $1-\frac{1}{8}\leq{2\alpha}<1 \pmod{2}$. This means that either $\frac{7}{16}\leq{\alpha}<\frac{1}{2}\pmod{2}$ or $1+\frac{7}{16}\leq{\alpha}<1+\frac{1}{2}\pmod{2}$. If the latter statement is true, then $\alpha>1$ or $\alpha<0$ but this is false because $\alpha<2-\frac{e}{2}<2-\frac{2}{2}=1$ and $\alpha>2-\frac{3}{2}-\frac{1}{2(2)}=\frac{1}{4}$. Therefore, $\frac{7}{16}\leq{\alpha}<\frac{1}{2}\pmod{2}$, giving us $\frac{1}{3}\leq{\alpha}<1\pmod{2}$ as desired. 
\end{proof}
\begin{proof} of Proposition \ref{fact}\\
Suppose that each $n\in\mathbb{Z}^+$ has been colored according to the parity of $\lfloor{n\alpha\rfloor}$ and that there exist $a_1,a_2,a_3,a_4$ all the same color such that the consecutive gaps lie in $D=\{k!\mid k\in\mathbb{Z}^+\}$. Since this sequence is monochromatic, $\lfloor{a_{i+1}\alpha}\rfloor-\lfloor{a_i\alpha}\rfloor$ is even for $i=1,2,3$. By Lemma \ref{1/3bound}, we know that for $i=1,2,3$, \[2n_i+\frac{1}{3}\leq{(a_{i+1}-a_i)\alpha}<2n_i+1\] for some integers $n_i$.

If $a_i\alpha\in [\frac{2}{3},1)\pmod{2}$, then $a_{i+1}\alpha\in[1,2)\pmod{2}$. Similarly, if $a_i\alpha\in [\frac{5}{3},2)\pmod{2}$, then $a_{i+1}\alpha\in[0,1)\pmod{2}$. Thus, in order for $a_{i+1}$ and $a_i$ to be the same color, we need $0\leq{a_i\alpha}<\frac{2}{3}\pmod{1}$. In particular, we need $0\leq{a_2\alpha}<\frac{2}{3}\pmod{1}$ and $0\leq{a_3\alpha}<\frac{2}{3}\pmod{1}$.

By similar reasoning, these inequalities force $0\leq{a_1\alpha}<\frac{1}{3}\pmod{1}$ and $0\leq{a_2\alpha}<\frac{1}{3}\pmod{1}$, respectively. However, since $\lfloor{a_2\alpha}\rfloor$ and $\lfloor{a_1\alpha}\rfloor$ have the same parity, this gives $\frac{-1}{3}<a_2\alpha-a_1\alpha<\frac{1}{3}\pmod{2}$, contradicting the restriction that $2n_1+\frac{1}{3}\leq{(a_2-a_1)\alpha}<2n_1+1$. Therefore, no such $a_1,a_2,a_3,a_4$ exist with respect to this coloring. We have successfully avoided arbitrarily long monochromatic $D$-diffsequences, in particular by avoiding monochromatic $D$-diffsequences of length at least four.
\end{proof}

	\section{Dividing Sequences}\label{sec4}
	
	In this section, we prove Theorem \ref{1.3}, providing a complete classification of when a set $D_{\{a_n\}}:=\{\prod_{i=1}^k a_i \mid k\in \mathbb{Z}^+\}$ is $2$-accessible. First we show that $D_{\{a_n\}}$ is $2$-accessible whenever $\{a_n\}$ contains arbitrarily long strings of consecutive $2$'s. This follows from a similar argument to the one Landman and Robertson used to show that $\{2^i\mid i\in\mathbb{Z}^+\}$ is $2$-accessible \cite{LR03}.
	
	\begin{lem}
	If $\{a_n\}$ contains arbitrarily long strings of consecutive $2$'s, then $D_{\{a_n\}}$ is $2$-accessible.
	\end{lem}
	\begin{proof}
	Suppose for the sake of contradiction that some coloring of $\mathbb{Z}^+$ avoids arbitrarily long monochromatic $D_{\{a_n\}}$-diffsequences. Suppose that the longest such diffsequence is of length $k$ and given by $b_1<b_2<\dots<b_k$. 
	
	Because $\{a_n\}$ contains arbitrarily long strings of consecutive $2$'s, there exists some index $j$ such that $a_j,a_{j+1},\dots,a_{j+k-1}$ all equal $2$. Letting $C=\displaystyle\prod_{i=1}^{j-1}a_i$, we have that $2^tC\in D_{\{a_n\}}$ for $t=0,1,\dots,k$. To avoid extending our monochromatic diffsequence $b_1<b_2<\dots<b_k$ to a longer one, $b_k+2^tC$ must be the other color for $t=0,1,\dots,k$.
	However, this gives a monochromatic sequence in the other color. The consecutive differences are 
	\[
	    (b_k+2^tC)-(b_k+2^{t-1}C)=2^{t-1}C\]
	    
	    for $t=1,2,\dots,k$. As stated above, these are all elements of $D_{\{a_n\}}$ meaning we have a monochromatic $D_{\{a_n\}}$-diffsequence of length $k+1$, giving a contradiction.

	\end{proof}
	
	We are left to consider $D_{\{a_n\}}$ in the case where $\{a_n\}$ does not have arbitrarily long strings of consecutive $2$'s. All elements of $D_{\{a_n\}}$ are multiples of $a_1$, so any monochromatic $D_{\{a_n\}}$-diffsequence stays within a residue class modulo $a_1$. Considering each residue class separately, we can avoid the presence of arbitrarily long monochromatic $D_{\{a_n\}}$-diffsequences in a residue class as long as it possible to avoid arbitrarily long monochromatic $T_{\{a_n\}}$-diffsequences in $\mathbb{Z}^+$ where $T_{\{a_n\}}=\{\frac{d}{a_1}\mid d\in D_{\{a_n\}}\}$.
	Thus, it suffices to only consider when $a_1=1$.
	
	If there is no string of $k$ consecutive $2$'s in $\{a_n\}$, we will show that there exists a coloring of $\mathbb{Z}^+$ that avoids monochromatic $D_{\{a_n\}}$-diffsequences of length $2^k+1$. As in Section \ref{prev}, we will use colorings related to Beatty sequences. Every $n\in\mathbb{Z}^+$ will be colored according to the parity of $\lfloor{n\alpha\rfloor}$ where our choice of $\alpha$ depends on $\{a_n\}$. 
	
	\begin{lem}\label{1/power}
	Let $\{a_n\}$ be a sequence of positive integers with $a_1=1$ and let $d_t:=\displaystyle\prod_{i=1}^t a_i$. Let $k$ be the smallest positive integer such that $\{a_n\}$ has no string of $k$ consecutive $2$'s.
	
	Then, there exists some $\alpha>0$, depending on $\{a_n\}$, such that for all $t\in\mathbb{Z}^+$, there exists some integer $n_t$ such that
	\[
	2n_t+\frac{1}{2^k}\leq{d_t\alpha}\leq{2n_t+1}.
	\]
	\end{lem}

	\begin{proof}
	
	Our plan is to construct a series of nested intervals $J_t:=[C_t,D_t]$, contained within $[\frac{1}{2^k},1]$, such that if $\gamma\in J_t$, then $d_h\gamma\in J_t \pmod{2}$ for $h=1,2,\cdots,t$. Any number which lies in $\bigcap_{t=1}^{\infty} J_t$ can then be chosen as $\alpha$.
	
	First, for each $t\in\mathbb{Z}^+$, we will construct closed intervals $I_b^t:=[C_b^t,D_b^t]$, for $b=1,\dots,t$, such that if $d_b\gamma\in I_b^t\pmod{2}$, then $d_h\gamma\in I_h^t\pmod{2}$, for $h=b+1,b+2,\dots,t$. These will be constructed in such a way that $\frac{1}{2^k}\leq{C_b^t}<D_b^t\leq{1}$, for $b=1,\dots,t$.
	
	Now, for each $t$, we will choose $J_t$ to be $I_1^t$. Thus, for $\gamma\in J_t$, we have $d_1\gamma\in I_1^t\pmod{2}$, which means that $d_h\gamma\in I_h^t\pmod{2}\subset [\frac{1}{2^k},1]\pmod{2}$ for $h=1,2,\dots,t$. In our construction, we will have $J_{t+1}\subset J_t$ for all $t\in\mathbb{Z}^+$, so $C_t\leq{C_{t+1}}<{D_{t+1}}\leq{D_t}$ for $t\in\mathbb{Z}^+$.
	
	Consider the sequence $\{C_t\}$ for $t=1,2,\dots$. This is a non-decreasing sequence and it is bounded above by $1$. Therefore, it converges to some limit $\beta_1$. Similarly, the sequence $\{D_t\}$ for $t=1,2,\dots$ is non-increasing and bounded below by $0$ so it also converges to some limit $\beta_2\geq{\beta_1}$. For each $t\in\mathbb{Z}^+$, we have $C_t\leq{\beta_1}\leq{\beta_2}\leq{D_t}$. There exists some number $\alpha$ which satisfies $\beta_1\leq{\alpha}\leq{\beta_2}$, and this $\alpha$ lies in $J_t$ for all $t\in\mathbb{Z}^+$, meaning that $d_t\alpha\in[\frac{1}{2^k},1]\pmod{2}$ for all $t\in\mathbb{Z}^+$, as desired.
	
	It now suffices to construct each $I_b^t$ such that the necessary properties are satisfied:
	\begin{itemize}
	    \item[1)] For $b=1,2,\dots,t,$
	    \[
	    \frac{1}{2^k}\leq{C_b^t}<D_b^t\leq{1}.
	    \]
	    \item[2)] If $d_b\gamma\in I_b^t\pmod{2}$, then $d_h\gamma\in I_h^t\pmod{2}$ for $h=b+1,b+2,\dots,t$.
	    \item[3)] For all $t\in\mathbb{Z}^+$,
	    \[
	    J_{t+1}\subset J_t.
	    \]
	\end{itemize}

	We construct the family of $I_b^t$'s in the following way.  For each $t$, we begin with $I_t^t:=[\frac{1}{2^{k-f(t)}},1]$, where $f(t)$ denotes the smallest nonnegative integer $p$ such that $a_{t-p}\ne{2}$. Note that since $\{a_n\}$ has no string of $k$ consecutive $2$'s, we will always have that $f(t)<k$.  Then for $b\geq{2}$, we recursively construct $I_{b-1}^t:=[C_{b-1}^t,D_{b-1}^t]$ from $I_b^t:=[C_b^t,D_b^t]$ as follows:
	
	\begin{itemize}
	\item If $a_b$ is odd:
	\[
	C_{b-1}^t=\frac{a_b-1+C_b^t}{a_b}\hspace{1in}D_{b-1}^t=\frac{a_b-1+D_b^t}{a_b}.
	\]
	
	\item If $a_b$ is even:
	\[
	C_{b-1}^t=\frac{a_b-2+C_b^t}{a_b}\hspace{1in}D_{b-1}^t=\frac{a_b-2+D_b^t}{a_b}.
	\]
	\end{itemize}
	
	We now verify this construction satisfies the necessary properties:
	
	\begin{itemize}
	\item[1)]
	For $b=1,2,\dots,t$,
	\[
	\frac{1}{2^k}\leq{C_b^t}<D_b^t\leq{1}.
	\]
	
	\begin{proof}\renewcommand{\qedsymbol}{}
	We have set $C_t^t:=\frac{1}{2^{k-f(t)}}\geq{\frac{1}{2^k}}$ and $D_t^t:=1$. We have $D_b^t>C_b^t$ for $b=t$ and whenever $D_b^t>C_b^t$, we get that
	\[a_b-2+D_b^t>a_b-2+C_b^t,\hspace{1in} a_b-1+D_b^t>a_b-1+C_b^t,
	\]
	so by induction, $D_b^t>C_b^t$ for $b=1,\dots,t$.
	
	Now we need to show that $C_b^t\geq{\frac{1}{2^k}}$ and $D_b^t\leq{1}$ for $b=1,\dots,t-1$. We will proceed by induction by showing that for $b=2,\dots,t$, $C_h^t\geq{\frac{1}{2^k}}$ for $h=b,b+1,\dots,t$  implies $C_{b-1}^t\geq{\frac{1}{2^k}}$ and that $D_b^t\leq{1}$ implies $D_{b-1}^t\leq{1}$. There are two cases to consider separately:
	
	\begin{itemize}
	    \item[i)] $a_b>2$, or
	    \item[ii)] $a_b=2$. 
	\end{itemize}
	
	In either case,
	\[
	   D_{b-1}^t\leq{\frac{a_b-1+D_b^t}{a_b}}\leq{\frac{a_b-1+1}{a_b}}=1.
	\]
	
	Since $C_b^t<D_b^t\leq{1}$, we know that $1\geq{1-C_b^t}>0$. In case i) with odd $a_b$, \[C_{b-1}^t=1-\frac{1-C_b^t}{a_b}.\] This is minimized when $a_b=3$, in which case, \[C_{b-1}^t=1-\frac{1-C_b^t}{3}\geq{\frac{2}{3}}.\]
	For case i) with even $a_b$,
	\[
	C_{b-1}^t=1-\frac{2-C_b^t}{a_b},
	\]
	which is minimized when $a_b=4$, yielding,
	\[
	C_{b-1}^t=1-\frac{2-C_b^t}{4}\geq{\frac{1}{2}}.
	\]
	
	This means that when $a_b>2$, we not only get that $C_{b-1}^t\geq{\frac{1}{2^k}}$, but we get the potentially stronger statement of $C_{b-1}^t\geq{\frac{1}{2}}$. Note that this stronger fact will be useful for case ii) as well as in the proof of Property 3).
	
	Now consider case ii). We consider the longest string of consecutive $2$'s starting at $a_b$, but not going past $a_t$. There is a unique positive $q$ satisfying $b+q-1\leq{t}$ such that $a_b=a_{b+1}=a_{b+2}=\dots=a_{b+q-1}=2$, but either $a_{b+q}>2$ or $b+q=t+1$. Note that the sequence $\{a_n\}$ has no string of $k$ consecutive $2$'s so $q<k$. First, we consider when $b+q\leq{t}$, so $a_{b+q}>2$. As a consequence of case i), we have that $C_{b+q-1}^t\geq{\frac{1}{2}}$.
	
	Now since $a_b=a_{b+1}=\dots=a_{b+q-1}=2$, we have for $h=b-1,b,\dots,b+q-2$,
	\begin{align*}
	C_h^t&=\frac{a_{h+1}-2+C_{h+1}^t}{a_{h+1}}\\
	&=\frac{2-2+C_{h+1}^t}{2}=\frac{C_{h+1}^t}{2}.
	\end{align*}
	
	This means
	\begin{align*}C_{b-1}^t=\frac{C_{b+q-1}^t}{2^q}\geq{\frac{1/2}{2^q}}=\frac{1}{2^{q+1}}.
	\end{align*}
	
	Recall that $q<k$, so this quantity is at least $\frac{1}{2^k}$.
	
	Next, consider when $b+q=t+1$. As in the previous subcase, we have that $C_{b-1}^t=\frac{C_{b+q-1}^t}{2^q}$. Our initial definition of $C_t^t$ gives $C_{b+q-1}^t=\frac{1}{2^{k-f(t)}}$.
	We have $a_b=a_{b+1}=\dots=a_t=2$, but we know that $a_{t-f(t)}\ne{2}$. Thus, $b>t-f(t)$, and $q=t-b+1<f(t)+1$. Therefore,
	\[
	C_{b-1}^t=\frac{C_{b+q-1}^t}{2^q}=\frac{1/{2^{k-f(t)}}}{2^q}=\frac{1}{2^{k+(q-f(t))}}\geq{\frac{1}{2^k}},
	\]
	where the last inequality follows from the fact that $q$ and $f(t)$ are both integers.
	\end{proof}
	\item[2)]
	If $d_b\gamma\in I_b^t\pmod{2}$, then $d_h\gamma\in I_h^t\pmod{2}$ for $h=b+1,b+2,\dots,t$.
	\begin{proof}\renewcommand{\qedsymbol}{}
	It suffices to show that if $d_b\gamma\in I_b^t\pmod{2}$, then $d_{b+1}\gamma\in I_{b+1}^t\pmod{2}$.
	
	If $d_b\gamma\in I_b^t\pmod{2}$ and $a_{b+1}$ is odd,
	\begin{align*}
	  C_b^t&\leq{d_b\gamma}\leq{D_b^t}\pmod{2}\\
	  \frac{a_{b+1}-1+C_{b+1}^t}{a_{b+1}}&\leq{d_b\gamma}\leq\frac{a_{b+1}-1+D_{b+1}^t}{a_{b+1}}\pmod{2}\\
	  a_{b+1}-1+C_{b+1}^t&\leq{a_{b+1}d_b\gamma}\leq{a_{b+1}-1+D_{b+1}^t}\pmod{2}\\
	  C_{b+1}^t&\leq{d_{b+1}\gamma}\leq{D_{b+1}^t}\pmod{2}.
	\end{align*}
	
	Going from the second line to the third line uses that $0\leq{C_{b+1}^t}\leq{D_{b+1}^t}\leq{1}$. In particular, we have some integer $n_b$ where $d_b\gamma\in[2n_b+\frac{a_{b+1}-1+C_{b+1}^t}{a_{b+1}},2n_b+\frac{a_{b+1}-1+D_{b+1}^t}{a_{b+1}}]$, so multiplying by $a_{b+1}$ gives $a_{b+1}d_b\gamma\in[2n_ba_{b+1}+(a_{b+1}-1)+C_{b+1}^t,2n_ba_{b+1}+(a_{b+1}-1)+D_{b+1}^t]$.
	
	Similarly, if $d_b\gamma\in I_b^t\pmod{2}$ and $a_{b+1}$ is even,
	\begin{align*}
	  C_b^t&\leq{d_b\gamma}\leq{D_b^t}\pmod{2}\\
	  \frac{a_{b+1}-2+C_{b+1}^t}{a_{b+1}}&\leq{d_b\gamma}\leq\frac{a_{b+1}-2+D_{b+1}^t}{a_{b+1}}\pmod{2}\\
	  a_{b+1}-2+C_{b+1}^t&\leq{a_{b+1}d_b\gamma}\leq{a_{b+1}-2+D_{b+1}^t}\pmod{2}\\
	  C_{b+1}^t&\leq{d_{b+1}\gamma}\leq{D_{b+1}^t}\pmod{2},
	\end{align*}
	
	where going from the second line to the third line again uses that $0\leq{C_{b+1}^t}\leq{D_{b+1}^t}\leq{1}$.
	\end{proof}
	\item[3)] For all $t\in\mathbb{Z}^+$,
	    \[
	    J_{t+1}\subset J_t.
	    \]
	    \begin{proof}\renewcommand{\qedsymbol}{}
	    Recall that $J_t$ is $I_1^t$. We will prove the stronger claim that $I_b^{t+1}\subset I_b^t$ for all $b=1,2,\dots,t$. We will proceed by induction by showing the base case $I_t^{t+1}\subset I_t^t$ and that $I_b^{t+1}\subset I_b^t$ implies $I_{b-1}^{t+1}\subset I_{b-1}^t$ for $b=2,3,\dots,t$.
	    
	    For the base case, we must show that $C_t^{t+1}\geq{C_t^t}$ and $D_t^{t+1}\leq{D_t^t}$. By construction, $D_t^t=1$ and we have already shown property 1) which gives that $D_t^{t+1}\leq{1}$. Now we will show that $C_t^{t+1}\geq{C_t^t}$.
	    
	    By construction, $C_t^t=\frac{1}{2^{k-f(t)}}\leq{\frac{1}{2}}$. When $a_{t+1}>2$, case i) of the proof of property 1) gives that $C_t^{t+1}\geq{\frac{1}{2}}\geq{C_t^t}$. When $a_{t+1}=2$, we have \[C_t^{t+1}=\frac{C_{t+1}^{t+1}}{2}=\frac{1/2^{k-f(t+1)}}{2}=\frac{1}{2^{k-f(t+1)+1}}.\]
	    
	    By the definition of $f(t)$, we have that $a_{t-f(t)}=a_{t+1-(f(t)+1)}\ne 2$, but the last $f(t)$ elements in the finite sequence $a_1,a_2,\dots,a_t$ are $2$'s. $a_{t+1}=2$ as well so the last $f(t)+1$ elements in the finite sequence $a_1,a_2,\dots,a_{t+1}$ are $2$'s. This yields $f(t+1)=f(t)+1$. Therefore, $C_t^{t+1}=\frac{1}{2^{k-f(t+1)+1}}=\frac{1}{2^{k-(f(t)+1)+1}}=\frac{1}{2^{k-f(t)}}={C_t^t}$, and the base case is complete.
	    
	    If we have $I_b^{t+1}\subset I_b^t$, then $C_{b}^{t+1}\geq{C_b^t}$ and $D_{b}^{t+1}\leq{D_b^t}$. Depending on the parity of $a_b$, we have either $C_{b-1}^{t+1}=\frac{a_b-1+C_b^{t+1}}{a_b}$ and $C_{b-1}^t=\frac{a_b-1+C_b^t}{a_b}$, or $C_{b-1}^{t+1}=\frac{a_b-2+C_b^{t+1}}{a_b}$ and $C_{b-1}^t=\frac{a_b-2+C_b^t}{a_b}$. In either case, the expression for $C_{b-1}^{t+1}$ has at least as large a numerator with the same denominator, so $C_{b-1}^{t+1}\geq{C_{b-1}^t}$.
	    
	    Similarly, we have either $D_{b-1}^{t+1}=\frac{a_b-1+D_b^{t+1}}{a_b}$ and $D_{b-1}^t=\frac{a_b-1+D_b^t}{a_b}$, or $D_{b-1}^{t+1}=\frac{a_b-2+D_b^{t+1}}{a_b}$ and $D_{b-1}^t=\frac{a_b-2+D_b^t}{a_b}$. In either case, the expression for $D_{b-1}^t$ has at least as large a numerator with the same denominator, so $D_{b-1}^{t}\geq{D_{b-1}^{t+1}}$.
	    Thus, $I_b^{t+1}\subset I_b^t$ implies $I_{b-1}^{t+1}\subset I_{b-1}^t$, and the induction is complete.
	    \end{proof}
	\end{itemize}

 	\end{proof}
 	
 	We are now ready to complete the proof of Theorem \ref{1.3}. In particular, we show that when $\{a_n\}$ lacks arbitrarily long strings of consecutive $2$'s, then there exists a coloring of $\mathbb{Z}^+$ which avoids arbitrarily long monochromatic $D_{\{a_n\}}$-diffsequences.
	
	\begin{proof}For ${\{a_n\}}$ with no string of $k$ consecutive $2$'s, choose some $\alpha$ satisfying the statement of Lemma \ref{1/power} such that $d\alpha\in[\frac{1}{2^k},1]\pmod{2}$ for $d\in D_{\{a_n\}}$. Color each $n\in\mathbb{Z}^+$ according to the parity of $\floor{n\alpha}$. Suppose for the sake of contradiction that we have a monochromatic $D_{\{a_n\}}$-diffsequence $b_1<b_2<\dots<b_{2^k+1}$ with respect to this coloring. Since this sequence is monochromatic, $\lfloor{b_{i+1}}\alpha\rfloor-\lfloor{b_i\alpha\rfloor}$ is even for $i=1,2,\dots, 2^k$. By Lemma \ref{1/power}, we know that for $i=1,2,\dots,2^k$,
	\begin{equation}\label{eq}
	2m_i+\frac{1}{2^k}\leq{(b_{i+1}-b_i)\alpha}\leq{2m_i+1}
	\end{equation}
	for some integers $m_i$. In order for $b_{i+1}$ and $b_i$ to be the same color, we need $0\leq{b_i\alpha}<1-\frac{1}{2^k}\pmod{1}$, so $0\leq{b_{2^k}\alpha}<1-\frac{1}{2^k}\pmod{1}$. Since $\lfloor{b_{2^k}\alpha}\rfloor$ and $\lfloor{b_{2^k-1}\alpha}\rfloor$ have the same parity, this in turn necessitates
	
	\[
	0\leq{(b_{2^k-1})\alpha}<1-\frac{2}{2^k}\pmod{1},
	\]
	
	which necessitates
	
		\[
	0\leq{(b_{2^k-2})\alpha}<1-\frac{3}{2^k}\pmod{1}.
	\]
	
	Continuing in this manner, we get that
	
	\[
	0\leq{b_2\alpha}<1-\frac{2^k-1}{2^k}=\frac{1}{2^k}\pmod{1}.
	\]
	
	However, $\lfloor{b_1\alpha}\rfloor$ and $\lfloor{b_2\alpha}\rfloor$ have the same parity so either $0\leq{b_2\alpha-b_1\alpha}<\frac{1}{2^k}\pmod{2}$ or $1<b_2\alpha-b_1\alpha<2\pmod{2}$. This directly contradicts Equation \ref{eq}.
	\end{proof}

	\section{Concluding Remarks}\label{cr}
	In this paper, we improved the lower bound for $\Delta(D,k)$ when $D=\{2^i\mid i\in\mathbb{Z}_{\geq{0}}\}$, establishing that $\Delta(D,k)$ grows faster than any polynomial. The exponent of the lower bound is $\Theta(\sqrt{k})$ while the exponent of the upper bound is $\Theta(k)$. Values computed up to $k=12$ align closely with the lower bound so we conjecture that it is asymptotically tight:
	\begin{conj}
	When $D=\{2^i\mid i\in\mathbb{Z}_{\geq{0}}\}$,
	\[
	\Delta(D,k)=2^{\Theta(\sqrt{k})}.
	\]
	\end{conj}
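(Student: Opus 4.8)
Since the lower bound $\Delta(D,k) \ge 2^{\Omega(\sqrt{k})}$ is exactly the content of Theorem \ref{main}, the plan is to concentrate entirely on the matching upper bound $\Delta(D,k) \le 2^{O(\sqrt{k})}$; together these give $\Delta(D,k) = 2^{\Theta(\sqrt{k})}$. It is convenient to pass to the inverse function. Write $M(n)$ for the largest $k$ such that every $2$-coloring of $[n]$ is guaranteed to contain a monochromatic $D$-diffsequence of length $k$, so that $\Delta(D,k) = \min\{n : M(n) \ge k\}$. A short computation shows that the upper bound $\Delta(D,k) \le 2^{O(\sqrt{k})}$ is equivalent to the estimate
\[
M(n) \ge c\,(\log_2 n)^2
\]
for some absolute constant $c>0$: if this holds then $M(n) \ge k$ as soon as $\log_2 n \ge \sqrt{k/c}$, whence $\Delta(D,k) \le 2^{\sqrt{k/c}}$. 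Thus the entire problem reduces to the target lemma that \emph{every $2$-coloring of $[n]$ contains a monochromatic $D$-diffsequence of length $\Omega((\log n)^2)$}. The Thue-Morse colorings $P_t$ of Section~2 show this is sharp: on $[2^t]$ they admit no monochromatic diffsequence of length more than $\Theta(t^2)$, so the conjecture amounts to proving that these balanced colorings are essentially extremal.

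The route I would take to the target lemma is a scale-by-scale recursion. Setting $N = \log_2 n$ and working on $[2^N]$, the goal is to establish a recursion of the shape
\[
M(2^N) \ge M(2^{N-1}) + \Omega(N),
\]
which telescopes to $M(2^N) \ge \sum_{j} \Omega(j) = \Omega(N^2)$, precisely the quadratic bound sought. The intended mechanism mirrors the structure of the extremal example, where the single scale $2^{t-2}$ already supplies about $t$ of the terms of a longest diffsequence: given an arbitrary coloring of $[2^N]$, one would harvest $\Omega(N)$ monochromatic terms associated with the coarsest scales and then recurse, in the same color, on a dyadic sub-block of size $2^{N-1}$ to gain $M(2^{N-1})$ further terms, splicing the two pieces into a single diffsequence whose consecutive gaps remain powers of $2$.

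The difficulty, and the reason the statement is still only conjectural, is that $\Omega(N)$ monochromatic terms cannot be read off a single scale: along any fixed arithmetic progression of common difference $2^m$ an adversary may alternate colors, forcing monochromatic runs of length $1$. In the extremal colorings the terms at scale $2^m$ are interleaved with moves at smaller scales, so the harvested terms genuinely live across several scales at once, and the splicing must respect both monochromaticity and the increasing order of the diffsequence. The crux is therefore a stability result: one must show that the only way to avoid the quadratic count is to be, at every scale, as balanced as the Thue-Morse coloring, and that any deviation from this balance at some scale creates enough same-color pairs at a nearby scale to lengthen the diffsequence. I expect the main obstacle to be quantifying this local-to-global balance transfer uniformly over all $2^n$ colorings; the tools I would try are an amortized potential-function analysis of a greedy extension that charges each forced color change to a scale, or an autocorrelation (Fourier) argument detecting, for any unbalanced coloring, a scale carrying an excess of monochromatic pairs. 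The close agreement of the lower bound with the values computed up to $k=12$ is the principal evidence that the recursion, and hence the extremality of the Thue-Morse colorings, should hold.
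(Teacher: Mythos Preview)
The statement you are addressing is a \emph{conjecture} in the paper, not a theorem; the paper offers no proof of it. The only established direction is the lower bound $\Delta(D,k)\ge 2^{\Omega(\sqrt{k})}$ of Theorem~\ref{main}, while the best known upper bound remains the Landman--Robertson bound $\Delta(D,k)\le 2^k-1$. The paper's sole support for the upper half of the conjecture is the remark that computed values up to $k=12$ align with the lower bound. So there is no ``paper's own proof'' to compare against.

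Your write-up is accordingly not a proof but a research outline, and to your credit you say so explicitly (``the reason the statement is still only conjectural''). The reformulation via $M(n)$ and the target recursion $M(2^N)\ge M(2^{N-1})+\Omega(N)$ are sensible heuristics, and your identification of the obstacle---that the $\Omega(N)$ gain cannot be harvested at a single scale and that splicing across scales while preserving monochromaticity and the power-of-$2$ gap condition is the crux---is accurate. But nothing in the proposal actually establishes the recursion or the target lemma $M(n)\ge c(\log_2 n)^2$; the potential-function and Fourier ideas are suggestions, not arguments. As written, the upper bound $\Delta(D,k)\le 2^{O(\sqrt{k})}$ remains entirely open, so the proposal does not prove the conjecture.
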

	
	It does not make sense to consider $D=\{n^i\mid i\in\mathbb{Z}_{\geq{0}}\}$ for any other integer $n$ because $n^i\equiv 1\pmod{n-1}$, allowing us to use a periodic coloring modulo $n-1$ to show $D$ is not $2$-accessible. However, it may be interesting to consider other sets that exhibit exponential growth such as $D_{\alpha}^f:=\{\lfloor{\alpha^i}\rfloor\mid i\in\mathbb{Z}_{\geq{0}}\}$ or $D_{\alpha}^c=\{\lceil{\alpha^i}\rceil\mid i\in\mathbb{Z}_{\geq{0}}\}$ for $\alpha>1$.
	
	When $\alpha=2^{\frac{1}{n}}$ for some $n\in{\mathbb{Z}_{\geq{0}}}$, then both $D_{\alpha}^f$ and $D_{\alpha}^c$ contain all powers of $2$ and therefore are $2$-accessible. More generally, we may consider $D_{\delta, \alpha}^f:=\{\lfloor{\delta\alpha^i}\rfloor\mid i\in\mathbb{Z}_{\geq{0}}\}\cap\mathbb{Z}_{>0}$ and $D_{\delta,\alpha}^c=\{\lceil{\delta\alpha^i}\rceil\mid i\in\mathbb{Z}_{\geq{0}}\}$ for $\alpha>1, \delta>0$. While not quite in this form, the set of Fibonacci numbers, which is $2$-accessible \cite{LR03}, is similar to $D_{\delta,\alpha}^f$ and $D_{\delta,\alpha}^c$ for $\alpha=\frac{1+\sqrt{5}}{2}$ and $\delta=\frac{1}{\sqrt{5}}$.
	
	\begin{ques}
	Is there any $\alpha\in (1,2)$ for which $D_{\delta,\alpha}^f$ or $D_{\delta,\alpha}^c$ is not $2$-accessible for some choice of $\delta>0$? (for $\delta=1$?)
	\end{ques}
	
	\begin{ques}
	Is there any $\alpha>2$ for which $D_{\delta,\alpha}^f$ or $D_{\delta,\alpha}^c$ is $2$-accessible for some choice of $\delta>0$? (for $\delta$=1?)
	\end{ques}
	
	If the answer to either of these questions is yes, we can also consider whether the set of such $\alpha$ has nonzero measure.
	
	While we have, in Theorem \ref{1.3}, demonstrated a wide class of sets that are not $2$-accessible, this occurs for divisibility reasons, as opposed to sparsity reasons. It would be interesting to determine whether there exists a general growth condition on the elements of a set $D$ which can disqualify it from being $2$-accessible. For example, we propose the following question.
 	
 	\begin{ques}
 	Does there exist an absolute constant $C\geq{2}$ such that there is no $2$-accessible set $D=\{d_1,d_2,d_3,\cdots\}$ satisfying $d_{i+1}>Cd_i$ for all $i\in\mathbb{Z}^+$? If yes, what is the smallest such $C$?
 	\end{ques}

	For sets $D$ that are $r$-accessible, there are examples where $\Delta(D,k;r)$ is polynomial in $k$ such as $D=\{a\in\mathbb{Z}^+\mid m\nmid a\}$ for any integer $m\geq{3}$ and $r=2$ \cite{CCLS18}, $D=\{2\}\cup\{2a-1\mid a\in\mathbb{Z}^+\}$ for $r\leq{3}$ \cite{Lan97}, and $D=\{ma\mid a\in\mathbb{Z}^+\}$ for any $m\in\mathbb{Z}^+$ and any $r\in\mathbb{Z}^+$ . There are also cases like $D=\{2^i\mid i\in\mathbb{Z}_{\geq{0}}\}$ with $r=2$ where $\Delta(D,k;r)$ is at most exponential in $k$, but still grows faster than any polynomial. We consider what other types of behavior are possible for the function $\Delta(D,k;r)$ for sets $D$ that are $r$-accessible.
	
	\begin{ques}
	For which $t\in\mathbb{R}^+$ does there exist an $r$-accessible set $D\subset\mathbb{Z}^+$ for which
	\[\Delta(D,k;r)=e^{\Omega(k^t)}\hspace{0.1in}?\]
	\end{ques}
	
	
	One possible approach is as follows. For any infinite set $T\in\mathbb{Z}^+$, the set $D=\{i-j\mid i,j\in T, i>j\}$ is $r$-accessible for any $r$ \cite{LR03}. 
	This means we can construct arbitrarily sparse sets that are still $r$-accessible and we generally expect a sparser set to have higher values for $\Delta(D,k;r)$.

	\textbf{Acknowledgments}. The author would like to thank Bruce Landman for introducing him to diffsequences at the 2015 University of West Georgia REU.
	 \bibliographystyle{alpha}
	 \bibliography{references}

\end{document}